\newtheorem{theorem}{Theorem}
\newtheorem{definition}{Definition}
\newtheorem{lemma}[theorem]{Lemma}
\def\be{\begin{equation}}
\def\ee{\end{equation}}
\def\geq{\geqslant}
\def\N{\mathbb N}
\def\R{\mathbb R}
\def\grad{\text{grad}}
\def\Hess{\text{Hess}}
\def\b[#1]{{\textbf #1}}
\def\exp{\text{exp}}
\title{A direct Proof for Quadratic Convergence of the Geometric Newton Method
\thanks{%
The research was supported by
the Swiss National Fund under grant SNF 140635.
}
}
\author{Markus Sprecher \footnote{Seminar for Applied Mathematics, ETH Zürich, Rämistrasse 101, 8092 Zürich, Switzerland.}}
\begin{document}

\maketitle

\section*{Introduction}

We consider the problem of numerically computing a critical point of a functional $J\colon M\rightarrow \R$ where $M$ is a Riemannian manifold. Due to local quadratic convergence a popular choice to solve this problem is the geometric Newton method. The proofs for quadratic convergence either use computations in a chart (Theorem 6.3.2 and Section 6.3.1 in \cite{absil}) or require additional geometric quantities such as parallel translation \cite{smith}. In this short note we provide a direct proof for quadratic convergence.

The Riemannian gradient $\grad J(p)\in T_pM$ and Riemannian Hessian $\Hess J(p)\colon T_pM\rightarrow T_pM$ where $T_pM$ is the tangent space at $p\in M$ are usually introduced using covariant derivatives. By Proposition 5.5.5 of \cite{absil}, the Riemannian gradient and Riemannian Hessian can also be seen as the classical gradient and classical Hessian of the composition $J\circ R_p$ at $0$, where $R_p\colon T_pM\rightarrow M$ is a retraction of order $2$ as defined below.
\begin{definition}
Let $M$ be a Riemannian manifold and $p\in M$. A $C^\infty$-map $R_p\colon T_pM\rightarrow M$ is called a retraction of order $k\in \N$ if
$$
d(R_p(v),\exp_p(v))\lesssim |v|^{k+1},
$$
where $d\colon M\times M\rightarrow \R_{\geq 0}$ denotes the geodesic distance on $M$, $\exp_p\colon T_pM\rightarrow M$ the exponential map, $|\cdot|$ the norm induced by the inner product on $T_pM$ and $\lesssim$ indicates that there is a constant such that the left hand side can be estimated by a constant times the right hand side for $|v|$ small enough.
\end{definition}
Taylor expansion of $J\circ R_p$ at $0$ yields
\be\label{equ::Taylor}
J(R_p(v))=J(p)+\langle v,\grad J(p)\rangle+\frac{1}{2}\langle v,\Hess J(p)v\rangle+\mathcal{O}(|v|^3),
\ee
where $\langle \cdot,\cdot \rangle$ is the inner product on $T_pM$ and $\mathcal{O}(|v|^3)$ a term which can be estimated by a constant (depending on the third derivative of $J\circ R_p$ at $0$) times $|v|^3$ for $|v|$ small enough.

Equation \eqref{equ::Taylor} together with the fact that $\Hess J(p)$ is self-adjoint uniquely determines $\grad J(p)\in T_pM$ and $\Hess J(p)\colon T_pM\rightarrow T_pM$ and we can therefore also use \eqref{equ::Taylor} as a definition for $\grad J(p)$ and $\Hess J(p)$. 
\begin{definition}\label{def::Hess}
Let $M$ be a Riemannian manifold and $R_p$ a retraction of order $1$ at $p\in M$. Then the Riemannian gradient $\grad J(p)\in T_pM$ and Riemannian Hessian $\Hess J(p)\colon T_pM\rightarrow T_pM$ are the unique vector respectively unique self-adjoint operator such that \eqref{equ::Taylor} holds.
\end{definition}
Note that here we require the retraction only to be of order $1$. The Riemannian Hessian will in general depend on the choice of the retraction. However, if the retraction is of order $2$ the Riemannian Hessian will coincide with the Riemannian Hessian of \cite{absil}. Definition \ref{def::Hess} will allow us to give a direct proof for quadratic convergence. We can now define the geometric Newton method.
\begin{definition}
Let $M$ be a Riemannian manifold, $R_p$ a retraction of order $1$ at $p\in M$ and $\grad J(p)\in T_pM$ and $\Hess J(p)\colon T_pM\rightarrow T_pM$ as in Definition \ref{def::Hess}. Assume that $\Hess J(p)$ is invertible. Then the geometric Newton method at $p\in M$ is given by the iteration
\be\label{def::New}
\phi(p):=R_p\left(-(\Hess J(p))^{-1}\grad J(p)\right).
\ee
\end{definition}

\section*{Proof of quadratic convergence}

We will need two lemmas to prove local quadratic convergence of the geometric Newton method. First, we show an estimate involving the Riemannian gradient and Riemannian Hessian. It can be regarded as a weak version of the fact that the derivative of the gradient is the Hessian.
\begin{lemma}\label{lem::}
Let $M$ be a Riemannian manifold, $p^*$ a critical point of a three times differentiable functional $J\colon M\rightarrow \R$, $p\in M$, $R_p\colon T_pM\rightarrow M$ a retraction of order $1$, $\grad J(p)\in T_pM$ and $\Hess J(p)\colon T_pM\rightarrow T_pM$ as in Definition \ref{def::Hess} and $v\in T_pM$ such that $p^*=R_p(v)$. Then we have
\be\label{equ::est}
\left|\grad J(p)+\Hess J(p)v\right|\lesssim |v|^2.
\ee
The implicit constant depends only on the third derivative of $J\circ R_p$ at $0$.
\end{lemma}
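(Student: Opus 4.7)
The approach is to work with the auxiliary function $f := J\circ R_p \colon T_pM\to\R$, identify $\grad J(p)$ and $\Hess J(p)$ with its classical gradient and Hessian at $0$, and apply Taylor's theorem to $\nabla f$ itself rather than to $f$. The left-hand side of \eqref{equ::est} will then be shown to equal $\nabla f(v)$ up to an $O(|v|^2)$ error, and the criticality of $p^*$ will give $\nabla f(v)=0$.

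Concretely, I would proceed in three steps. First, identify $\grad J(p)=\nabla f(0)$ and $\Hess J(p)=D^2 f(0)$ by invoking the uniqueness clause of Definition \ref{def::Hess}; this is immediate from \eqref{equ::Taylor} and the uniqueness of the degree-$2$ Taylor polynomial of $f$ at $0$, since $f$ is three times differentiable (because $J$ is three times differentiable and $R_p$ is $C^\infty$).

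Second, apply Taylor's theorem to the $C^2$-map $\nabla f$ at $0$ to obtain
$$
\nabla f(v) \;=\; \nabla f(0) + D^2 f(0)\,v + O(|v|^2) \;=\; \grad J(p) + \Hess J(p)\, v + O(|v|^2),
$$
where, by the standard integral form of the remainder combined with continuity of $D^3 f$ near $0$, the implicit constant is controlled for $|v|$ small by a multiple of $\|D^3 f(0)\|$. This is exactly the dependence on the third derivative of $J\circ R_p$ at $0$ asserted by the lemma.

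Third, use the chain rule together with the criticality of $p^*$: since $dJ|_{p^*}=0$, we have $df|_v = dJ|_{p^*}\circ DR_p(v) = 0$, i.e.\ $\nabla f(v)=0$; substituting this into the previous display yields the desired bound. I do not expect a serious obstacle. The only subtle point is that differentiating the scalar Taylor expansion \eqref{equ::Taylor} term by term would a priori require justification; my plan sidesteps this by applying Taylor's theorem directly to the vector-valued map $\nabla f$ and relying on Definition \ref{def::Hess} to match up the coefficients.
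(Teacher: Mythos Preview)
Your argument is correct. The identification $\grad J(p)=\nabla f(0)$, $\Hess J(p)=D^2f(0)$ follows from Definition~\ref{def::Hess} and the uniqueness of Taylor coefficients; the Taylor expansion of the $C^2$-map $\nabla f$ at $0$ gives $\nabla f(v)=\grad J(p)+\Hess J(p)v+O(|v|^2)$ with the constant controlled by $D^3f$ near $0$; and the chain rule together with $dJ|_{p^*}=0$ yields $\nabla f(v)=0$.

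The paper takes a related but genuinely different route: instead of Taylor-expanding the vector field $\nabla f$, it stays entirely at the scalar level. Setting $w:=\grad J(p)+\Hess J(p)v$, it plugs $v+tw$ into the scalar expansion \eqref{equ::Taylor}, collects the $t$-linear term as $|w|^2+O(|w||v|^2)$, and then uses criticality of $p^*$ (in the same way you do, via the chain rule) to conclude that $J(R_p(v+tw))=J(p^*)+O(t^2)$, forcing the $t$-linear term to vanish and hence $|w|\lesssim |v|^2$. Your approach is essentially the classical Euclidean proof of Newton's quadratic convergence, transported to $T_pM$; its advantage is conceptual directness. The paper's approach has the virtue of never leaving the single scalar identity \eqref{equ::Taylor} that \emph{defines} $\grad J(p)$ and $\Hess J(p)$, so no separate appeal to a vector-valued Taylor theorem is needed---which fits the paper's theme of a ``direct'' proof built solely on Definition~\ref{def::Hess}.
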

\begin{proof}
Let $w:=\grad J(p)+\Hess J(p)v$. Replacing $v$ in \eqref{equ::Taylor} by $v+tw$ and using that $\Hess J(p)$ is self-adjoint yields
\begin{eqnarray*}
J(R_p(v+tw))&=&J(p)+\langle v+tw,\grad J(p)\rangle+\frac{1}{2}\langle v+tw,\Hess J(p)(v+tw)\rangle+\mathcal{O}(|v+tw|^3)\\
&=&J(p)+\langle v,\grad J(p)\rangle+\frac{1}{2}\langle v,\Hess J(p)v\rangle+\mathcal{O}(|v|^3)\\
&&+t\left(\langle w,\grad J(p)+\Hess J(p)v\rangle+\mathcal{O}(|w||v|^2)\right)+\mathcal{O}(t^2)\\
&=& J(p^*)+ t\left(|w|^2+\mathcal{O}(|w||v|^2)\right)+\mathcal{O}(t^2).
\end{eqnarray*}
Since $p^*$ is critical point of $J$ we have $J(R_p(v+tw))=J(p^*)+\mathcal{O}(t^2)$ hence $|w|^2=\mathcal{O}(|w||v|^2)$ and therefore $|w|\lesssim |v|^2$, which is equivalent to \eqref{equ::est}.
\end{proof}
Additionally, we will need the following Lemma.
\begin{lemma}\label{lem::2}
Let $M$ be a Riemannian manifold and $R_p$ a retraction of order $1$ at $p\in M$. Then we have
$$
d(R_p(v),R_p(w))=|v-w|+\mathcal{O}(|v|^2+|w|^2).
$$ 
\end{lemma}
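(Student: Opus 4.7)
My plan is to use the retraction $R_p$ itself as a smooth chart near $p$ and then carry out the length estimate in the pulled-back metric on $T_pM$.

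Because $R_p$ is a retraction of order $1$, the estimate $d(R_p(v),\exp_p(v))\lesssim|v|^2$ combined with $d\exp_p|_0=\mathrm{id}_{T_pM}$ forces $dR_p|_0=\mathrm{id}_{T_pM}$. Consequently $R_p$ is a local diffeomorphism on a neighborhood of $0$, and the pulled-back metric $R_p^*g$ on that neighborhood of $0\in T_pM$ satisfies, in any orthonormal basis of $(T_pM,\langle\cdot,\cdot\rangle)$,
$$
(R_p^*g)_{ij}(x)=\delta_{ij}+\mathcal{O}(|x|).
$$
For $|v|,|w|$ sufficiently small, standard local Riemannian theory guarantees that a minimizing geodesic between $R_p(v)$ and $R_p(w)$ stays inside the image of $R_p$, so $d(R_p(v),R_p(w))$ coincides with the intrinsic distance between $v$ and $w$ in $(T_pM,R_p^*g)$.

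The upper bound comes from computing the length of the straight segment $t\mapsto v+t(w-v)$, $t\in[0,1]$, in the pulled-back metric:
$$
\int_0^1\sqrt{(R_p^*g)_{ij}(v+t(w-v))(w-v)^i(w-v)^j}\,dt=|v-w|\bigl(1+\mathcal{O}(|v|+|w|)\bigr),
$$
and since $|v-w|\leq|v|+|w|$ the excess is $\mathcal{O}(|v|^2+|w|^2)$. For the matching lower bound the same expansion applied in reverse shows that any curve contained in the chart-ball of radius $\rho$ has length at least $|v-w|(1-C\rho)$.

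The step I expect to be the main obstacle is confining the minimizing geodesic to a ball of radius $\mathcal{O}(|v|+|w|)$ so that $\rho=\mathcal{O}(|v|+|w|)$ can be used in the lower bound. This uses the preliminary upper bound, which forces the geodesic to have length at most $\mathcal{O}(|v|+|w|)$, together with the fact that any path escaping a fixed small neighborhood of $p$ has length bounded below by a positive constant; hence for $|v|,|w|$ small enough the minimizer must remain inside a ball of the required small radius.
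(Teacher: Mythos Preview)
Your argument is correct in outline and would go through once the geodesic-confinement step is made precise: after the upper bound gives the minimizer length $\lesssim|v|+|w|$, the triangle inequality with $d(p,R_p(v))=|v|+\mathcal{O}(|v|^2)$ forces every point on the minimizer to lie within $\mathcal{O}(|v|+|w|)$ of $p$, so $\rho=\mathcal{O}(|v|+|w|)$ as you need. The sentence about ``escaping a fixed small neighborhood'' only yields confinement to the chart domain, not to a ball of radius $\mathcal{O}(|v|+|w|)$; it is the length bound plus the triangle inequality that does the real work.

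The paper takes a quite different and much shorter route: it invokes the Nash embedding theorem to assume $M\subset\R^n$ isometrically, so that $R_p(v)=p+v+\mathcal{O}(|v|^2)$ as points of $\R^n$ and $d(p,q)=|p-q|+\mathcal{O}(|p-q|^2)$; the estimate then drops out in one line. Your approach is intrinsic and avoids the heavy hammer of Nash, working instead with the pulled-back metric $R_p^*g$ and a direct length comparison. The price is that you must handle both an upper and a lower bound separately and control where the minimizing geodesic lives, whereas the Nash trick converts the problem into a purely Euclidean computation with no geodesic analysis at all. Both are valid; the paper's is terser, yours is more self-contained.
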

\begin{proof}
By Nash embedding theorem \cite{nash} we can assume that $M\subset \R^n$ for some $n\in \N$. As $R_p$ is a retraction of order $1$ we have $R_p(v)=p+v+\mathcal{O}(|v|^2)$. Furthermore we have $d(p,q)=|p-q|+\mathcal{O}(|p-q|^2)$. Hence we have
$$
d(R_p(v),R_p(w))=|R_p(v)-R_p(w)|+\mathcal{O}(|R_p(v)-R_p(w)|^2)=|v-w|+\mathcal{O}(|v|^2+|w|^2). \qedhere
$$
\end{proof}
We can now prove quadratic convergence.
\begin{theorem}
Let $M$ be a Riemannian manifold, $p^*$ a critical point of a three times differentiable functional $J\colon M\rightarrow \R$, $p\in M$ and $\phi(p)$ as defined in \eqref{def::New}. Assume that $\Hess J(p^*)$ defined in Definition \ref{def::Hess} is invertible. Then we have
$$
d(\phi(p),p^*)\lesssim d^2(p,p^*)
$$
for all $p$ in a neighborhood of $p^*$. The implicit constant depends only on the third derivative of $J\circ R_{p^*}$ at $0$ and on the operator norm of $(\Hess J(p^*))^{-1}$.
\end{theorem}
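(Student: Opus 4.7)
The plan is to pull everything back to the tangent space $T_pM$. Write $p^*=R_p(v)$ for the unique small $v\in T_pM$; use Lemma \ref{lem::} to relate $\grad J(p)$ and $\Hess J(p)v$; invert $\Hess J(p)$ to obtain an estimate on $v+\Hess J(p)^{-1}\grad J(p)$; then apply Lemma \ref{lem::2} to translate this tangent-space estimate into a geodesic-distance estimate between $\phi(p)=R_p(-\Hess J(p)^{-1}\grad J(p))$ and $p^*=R_p(v)$.

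In detail, I would first observe that, since $R_p$ is smooth with $R_p(0)=p$ and $DR_p(0)=\mathrm{Id}$ (both consequences of being a retraction of order $1$), it is a local diffeomorphism near $0$, so for $p$ close enough to $p^*$ there is a unique small $v\in T_pM$ with $R_p(v)=p^*$. Lemma \ref{lem::2} applied with $w=0$ then gives $|v|=d(p,p^*)+\mathcal{O}(d(p,p^*)^2)$, so in particular $|v|\lesssim d(p,p^*)$. Lemma \ref{lem::} applied at $p$ yields $|\grad J(p)+\Hess J(p)v|\lesssim |v|^2$; multiplying by $\Hess J(p)^{-1}$ (which exists for $p$ near $p^*$ by continuity and invertibility at $p^*$) produces
$$
\bigl|v+\Hess J(p)^{-1}\grad J(p)\bigr|\lesssim \bigl\|\Hess J(p)^{-1}\bigr\|\cdot|v|^2.
$$

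Setting $w:=-\Hess J(p)^{-1}\grad J(p)$, so that $\phi(p)=R_p(w)$, the previous bound reads $|v-w|\lesssim |v|^2$, and hence $|w|\leq |v|+|v-w|\lesssim |v|$ for $p$ close enough to $p^*$. Applying Lemma \ref{lem::2} to $R_p(v)$ and $R_p(w)$ then gives
$$
d(\phi(p),p^*)=d(R_p(w),R_p(v))=|v-w|+\mathcal{O}(|v|^2+|w|^2)\lesssim |v|^2\lesssim d(p,p^*)^2,
$$
which is the desired inequality.

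The only genuinely delicate point is the constant bookkeeping asserted in the theorem statement: the implicit constants in Lemmas \ref{lem::} and \ref{lem::2} are phrased in terms of third derivatives of $J\circ R_p$ at $0$ and of the second-order part of $R_p$, i.e.\ quantities depending on the base point $p$, whereas the theorem wants the final constant expressed in terms of $J\circ R_{p^*}$ and $\|\Hess J(p^*)^{-1}\|$. I would handle this at the end by a continuity argument: these quantities are continuous in $p$, so in a small enough neighborhood of $p^*$ they are bounded, say, by twice their values at $p^*$, and the product of these finitely many uniformly controlled constants is the implicit constant in the theorem. The rest of the proof is algebraically routine.
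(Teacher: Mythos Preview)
Your proposal is correct and follows essentially the same route as the paper's own proof: pull back to $T_pM$ via $p^*=R_p(v)$, use Lemma~\ref{lem::} to bound $|\grad J(p)+\Hess J(p)v|$, invert the Hessian, and then apply Lemma~\ref{lem::2} to turn the tangent-space estimate into a geodesic-distance estimate. You are in fact slightly more careful than the paper in spelling out why $|w|\lesssim|v|$ (needed for the $\mathcal{O}(|v|^2+|w|^2)$ term) and why $|v|\lesssim d(p,p^*)$, and your continuity remark about the constants makes explicit what the paper's middle inequality and final statement leave implicit.
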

\begin{proof}
Let $v$ be as in Lemma \ref{lem::}. By differentiability of $J$ and Lemma \ref{lem::} we have
\begin{eqnarray*}
\left|(\Hess J(p))^{-1}\grad J(p)+v\right|&\lesssim& \left\|(\Hess J(p))^{-1}\right\|_{T_pM\rightarrow T_pM} \left|\grad J(p)+\Hess J(p)v\right|\\
&\lesssim& \left(\left\|(\Hess J(p^*))^{-1}\right\|_{T_pM\rightarrow T_pM}+\mathcal{O}(d(p,p^*)) \right)|v|^2\\
&\lesssim& |v|^2.
\end{eqnarray*}
By Lemma \ref{lem::2} we now have
$$
d\left(\phi(p),p^*\right)= d\left(R_p(-(\Hess J(p))^{-1}\grad J(p)),R_p(v)\right)
\lesssim \left|(\Hess J(p))^{-1}\grad J(p)+v\right|
\lesssim |v|^2
\lesssim d^2(p,p^*). \qedhere
$$
\end{proof}

\bibliographystyle{plainnat}
\bibliography{riem_rev}

\end{document}